\patchcmd{\thebibliography}{\section*{\refname}}{}{}{}
\newtheorem{theorem}{Theorem}
\newtheorem{lemma}[theorem]{Lemma}
\title{Minimal Dynamical System for $\mathbb{R}^n$}
\author{Ankit Vishnubhotla}
\begin{document}
\setlength{\abovedisplayskip}{5pt}
\setlength{\belowdisplayskip}{5pt}

\begin{abstract}
    We will investigate $\mathbb{R}^n$ as the additive group with the Euclidean topology to give a description of $S(\mathbb{R}^n)$, the phase space of the universal ambit of $\mathbb{R}^n$ and $M(\mathbb{R}^n)$, the phase space of the universal minimal dynamical system, in terms of $M(\mathbb{Z}^n)$, the phase space of universal minimal flow of $\mathbb{Z}^n$. This extends work by Turek for $\mathbb{R}$ to $\mathbb{R}^n$.
    
    \smallskip
    \noindent \newline\textbf{Keywords.} minimal flow, dynamical system, universal ambit
    
    \smallskip
    \noindent \textbf{Classification.} 37B02/5
\end{abstract}

\maketitle

\section{Introduction}
Over the past two decades, there has been a growing interest in topological dynamics related to computing the universal minimal flow (UMF) of a topological group $G$. Much of this has been inspired by seminal work \cite{[KPT]} that shows connections between structural Ramsey theory and topological dynamics. This relationship uncovered that there are examples of extremely amenable topological groups where the universal minimal flow is a singleton, and in other instances where topological groups have non-trivial metrizable UMFs. These computations usually work with infinite-dimensional Polish groups so as to make use of the power of structural Ramsey theory. Some notable results are the extreme amenability of the automorphism group of the countable dense linear order without endpoints $(\mathbb{Q}, <)$ \cite{[P]} and that the space of all linear orderings of a countable set is the universal minimal flow of $S_\infty$, the group of permutations of an infinite set \cite{[GW]}.

Recently, there has been interest in understanding the dynamics of topological groups with UMFs that lack metrizability. Work by Basso and Zucker [Unpublished, see BZ for instance] provides a framework for understanding the dynamics of general topological groups beyond Polish groups, for which metrizability of the UMF typically indicates nice dynamics. They isolate a class of groups that they call Closed Almost Period (CAP) groups, where a topological group $G$ is CAP if the set of almost periodic points (the collection of points that belong to minimal subflows of a $G-$flow) is closed for each $G-$flow. Barto\v sov\' a and Zucker show how this CAP property coincides with metrizability at the level of Polish groups \cite{[BZ]}. Basso-Zucker also establish that CAP groups enjoy strong closure properties and are well behaved. Further in this direction, work by Barto\v sov\' a \cite{[B]} characterizes the UMF of general topological groups that possess certain properties, namely that the topological group $G$ contains a compact normal subgroup $K$ that acts freely on $M(G)$ and that there is a uniformly continuous cross-section $G/K\rightarrow G$.

Despite a great deal of work into understanding the minimal flows and topological dynamics of locally compact groups, work into the universal minimal flow has been much sparser, in part due to the non-metrizability of the UMF \cite{[KPT]}. This follows from a famous theorem of Veech \cite{[V]} which describes that no non-trivial (non-compact) locally compact group can be extremely amenable because every such group admits a free $G$-flow, and so $M(G)$ for $G$ locally compact is typically non-metrizable.  As part of some the work done in the past, Turek studied a prototypical locally compact group, $\mathbb{R}$, and described that its universal minimal flow can be decomposed using the sublattice $\mathbb{Z}$ \cite{[ST]}. More precisely, $M(\mathbb{R}) \cong M(\mathbb{Z}) \times \mathbb{I}/E$, where $\mathbb{I}$ is the unit interval $[0, 1]$ and $E$ is an equivalence relation that equates points on the boundary of $\mathbb{I}$ in a specific manner. This work by Turek expanded on previous work by Balcar and Blaszczyk \cite{[BB]} who proved that the phase space of $M(\mathbb{Z})$ is the absolute of the Cantor cube $\{0, 1\}^{2^\omega}$ using Boolean theoretic methods. This paper will investigate an extension of Turek's work to $\mathbb{R}^n$ and prove that the sublattice $\mathbb{Z}^n$ can be used to achieve a similar decomposition of the universal minimal flow. Future work in this orientation will be focusing on using such subgroup/sublattice techniques to better understand the UMF of $SL_n(\mathbb{R})$ and $GL_n(\mathbb{R})$.\newline

\section{Background}

A dynamical system is a triple $(G, X, \pi)$, where $G$ is a topological $T_0$ group (therefore Tychonoff), $X$ is a compact Hausdorff space, and $\pi$ is a continuous action on X. If we use an additive notation for $G$, where the identity element is 0, then $\pi: G \times X \rightarrow X$ is a continuous map such that:\newline
\indent (1) $\pi(0, x) = x$ for each $x \in X$,\newline
\indent (2) $\pi(g + h, x) = \pi(g, \pi(h, x))$ for each $g, h \in G$ and each $x \in X$.

If $(G, X, \pi)$ is a dynamical system, then the space $X$ is called the phase space of the system $(G, X, \pi)$. We use the notations $\pi^g$ for the homeomorphisms $\pi_g : X \rightarrow X$ and $\pi_x$  for the continuous maps $\pi_x: G \rightarrow X$ defined in the following way: $\pi^g(x) = \pi(g, x)$ and $\pi_x(g) := \pi(g, x)$. The set $\pi_x(G)$ is called the orbit of $x \in X$ in the system $(G, X, \pi)$. If the orbit of a point $x \in X$ is dense in the phase space $X$ of a dynamical system $(G, X, \pi)$ then the tuple $(G, X, \pi; x)$ is called an ambit and the point $x$ is noted as the base point of the ambit $(G, X, \pi; x)$.

Let $(G, X, \pi)$ and $(G, Y, \varrho)$ be dynamical systems and let $\phi: X \rightarrow Y$ be a continuous map. If $\phi \circ \pi^g =
\varrho^g \circ \phi$ for any $g \in G$ then $\phi$ is called a homomorphism of the system $(G, X, \pi)$ into the system $(G, Y, \varrho)$. If we deal with ambits $(G, X, \pi; x)$ and $(G, Y, \varrho; y)$, then a homomorphism of the systems $\phi : (G, X, \pi) \rightarrow (G, Y, \varrho)$ such that $\phi(x) = y$ is called a homomorphism of ambits. In the case when $\phi$ is a homeomorphism (surjection) of spaces then $\phi$ is called an isomorphism (epimorphism) of dynamical systems or ambits.

A dynamical system $(G, X, \pi)$ is called minimal if there is no proper closed non-empty set $M \subseteq X$ such that $\pi^g(M) \subseteq M$ for each $g \in G$. The system is minimal iff the orbit $\pi_x(G)$ is dense in $X$ for each $x \in X$.

An ambit $(G, X, \pi; x)$ is called universal for a group $G$, if for any ambit $(G, Y, \varrho: y)$ there exists a surjective homomorphism of ambits $\phi : (G, X, \pi; x) \rightarrow (G, Y, \varrho: y)$.

    The construction of the universal ambit for any group was presented by Brook in \cite{[RB]}. The phase space of the universal ambit for group $G$ is the Samuel compactification of $G$ with respect to its right uniformity. Equivalently, we can obtain the phase space of this ambit if we take space of all so-called regular ultrafilters with respect to a strong inclusion \cite{[ST]} - $\Subset$ - defined in the following way: if $F$ is closed and $U$ open in $G$ then $F \Subset U$ whenever there exists an open neighborhood $V$ of the identity element such that $V + F \subseteq U$. A family $\mathcal{F}$ of non-empty open subsets of $G$ is called a regular ultrafilter whenever the following conditions hold:\newline
\indent (1) if $F \Subset U$, then either $U \in \mathcal{F}$ or $G \setminus F \in \mathcal{F}$ \newline
\indent (2) for every $U_1, U_2 \in \mathcal{F}$ there is an open, non-empty subset $U \subseteq G$ such that $U \in \mathcal{F}$ and $\text{cl }U \Subset U_1 \cap U_2$.

Let $S(G) = \{\mathcal{F} \subseteq \mathcal{P}(G) : \mathcal{F} \text{ is a regular ultrafilter}\}$. For every open, non-empty subset $U$ of $G$, we set $\tilde{U} = \{\mathcal{F} \in S(G) : U \in \mathcal{F} \}$. The family $\{\tilde{U}: U \text{ is an open non-empty subset of } G\}$ generates a compact, Hausdorff topology on $S(G)$ and the group $G$ can be embedded in $S(G)$ as a dense subspace $\{\mathcal{F}_g: g \in G\}$, where $\mathcal{F}_g = \{U : U \text{ is open in } G \,\&\, g \in U\}$, where the notion of strong inclusion corresponds with a notion of relation of subordination. Let $\pi^G: G \times S(G) \rightarrow S(G)$ be defined in the following way: 
$$\pi^G(g, \mathcal{F}) := L_g(\mathcal{F}),$$where $L_g$ is an extension on $S(G)$ of the left translation $l_g: G \rightarrow G$, expressed by the formula $L_g(\mathcal{F}) = \{g + U : U \in \mathcal{F}\}$.

This construction of the phase space through regular ultrafilters yields the tuple $(G, S(G), \pi_G; 0)$, which is a universal ambit for a group $G$ \cite{[ST]}.

We use the same notions of regular ultrafilters given in the paper. In this case the dynamical system we will start with is the universal ambit for $\mathbb{R}^n$, $(\mathbb{R}^n, S(\mathbb{R}^n), \pi, 0)$. Our ultimate goal is to provide a description for $M(\mathbb{R}^n)$.

\section{Finding the Universal Ambit}
First we describe the space $S(\mathbb{R}^n)$. Here we denote by $\mathbb{R}^n$ the group $\mathbb{R}^n$ with the Euclidean topology. Let $\mathbb{I}$ denote $[0,1]$, the closed interval of $\mathbb{R}$, and finally let $\mathbb{Z}$ be the group of integers.\newline
\indent Now we will describe the integer maps we will use. For any $\vec{z} \in \mathbb{Z}^n$ define shift maps $h_1, \ldots, h_n$ such that $h_1(\vec{z}) = (z_1 + 1, \ldots, z_n), \ldots, h_n(\vec{z}) = (z_1, \ldots, z_n + 1)$. Moreover, for any $N = (v_1, \ldots, v_n) \in \mathbb{Z}^n$ we define $h_N(\vec{z}) = h_1^{v_1} \circ \ldots \circ h_n^{v_n}(\vec{z})$.

For a point $\vec{i} \in \mathbb{I}^n$ we define for a set $A = \{a_1, \ldots, a_j\} \subseteq [n] = \{1, 2, \ldots, n\}$ the map $\psi_A(\vec{i}) = \psi_{a_1} \circ \ldots \circ \psi_{a_j}(\vec{i})$, where
\begin{equation*}
\psi_{a_l}(\vec{i}) = 
\begin{cases}
(i_1, \ldots, i_{a_l-1}, 0, i_{a_l+1}, \ldots, i_n) & i_{a_l} = 1 \\
(i_1, \ldots, i_{a_l-1}, 1, i_{a_l+1}, \ldots, i_n) & i_{a_l} = 0
\end{cases}.
\end{equation*}

\indent Now consider the space $\mathbb{Z}^n \times \mathbb{I}^n$. We are going to define an equivalence relation $h$ such that $\mathbb{R}^n \cong (\mathbb{Z}^n \times \mathbb{I}^n)/h$. In order to do this we must equate coordinates on the boundaries of the interval $\mathbb{I}$ with each other. 

Let $(\vec{z}, \vec{i}) \in (\mathbb{Z}^n \times \mathbb{I}^n)$ be a point in the space we are observing. Suppose k coordinates of $\vec{i}$ take on the value of 1, such that $i_{a_1} = \ldots = i_{a_k} = 1$ and $a_1 < \ldots < a_k$. We can identify the point $(\vec{z}, \vec{i})$ with the points $(h_{N_A}(\vec{z}), \psi_A(\vec{i}))$, where $A \subseteq \{a_1, \ldots, a_k\}$ and $\mathbf{1}_A \in \mathbb{N}^n$ such that $\mathbf{1}_A$ is defined as the indicator set of $A$. 
\[
(\mathbf{1}_A)_j =
\begin{cases}
1 & j \in A \\
0 & \text{otherwise}
\end{cases}.
\]
Thus for every nonzero coordinate $v \in A$ we have that $i_v = 1$. This equivalence relation identifies all points with a 1 value in some set of coordinates of $\vec{i}$ with points that instead step up by 1 in the corresponding coordinate in $\vec{z}$. Moreover, given a point $(\vec{z}, \vec{i})$ where $i_b = 0$, then a point $(\vec{z}', \vec{i}')$, where $i'b = 1$ and $z'_b = z_b - 1$ will be in the same equivalence class as the original point. Since each coordinate operates independently, if a point has 0 value in any set of coordinates of $i$ then it will also be in the same equivalence class as points with 1 value in the same subset of this set of coordinates, and a corresponding step down by 1 in the set/subset of coordinates in $\vec{z}$. This yields a quotient space $\mathbb{Z}^n\times \mathbb{I}^n/h$, which is homeomorphic to $\mathbb{R}^n$. In Figure 1 this is visualized in 2 dimensions, $(\mathbb{Z}^2 \times \mathbb{I}^2)/h$.

\begin{figure}
\centering
\includegraphics[width=0.8\textwidth]{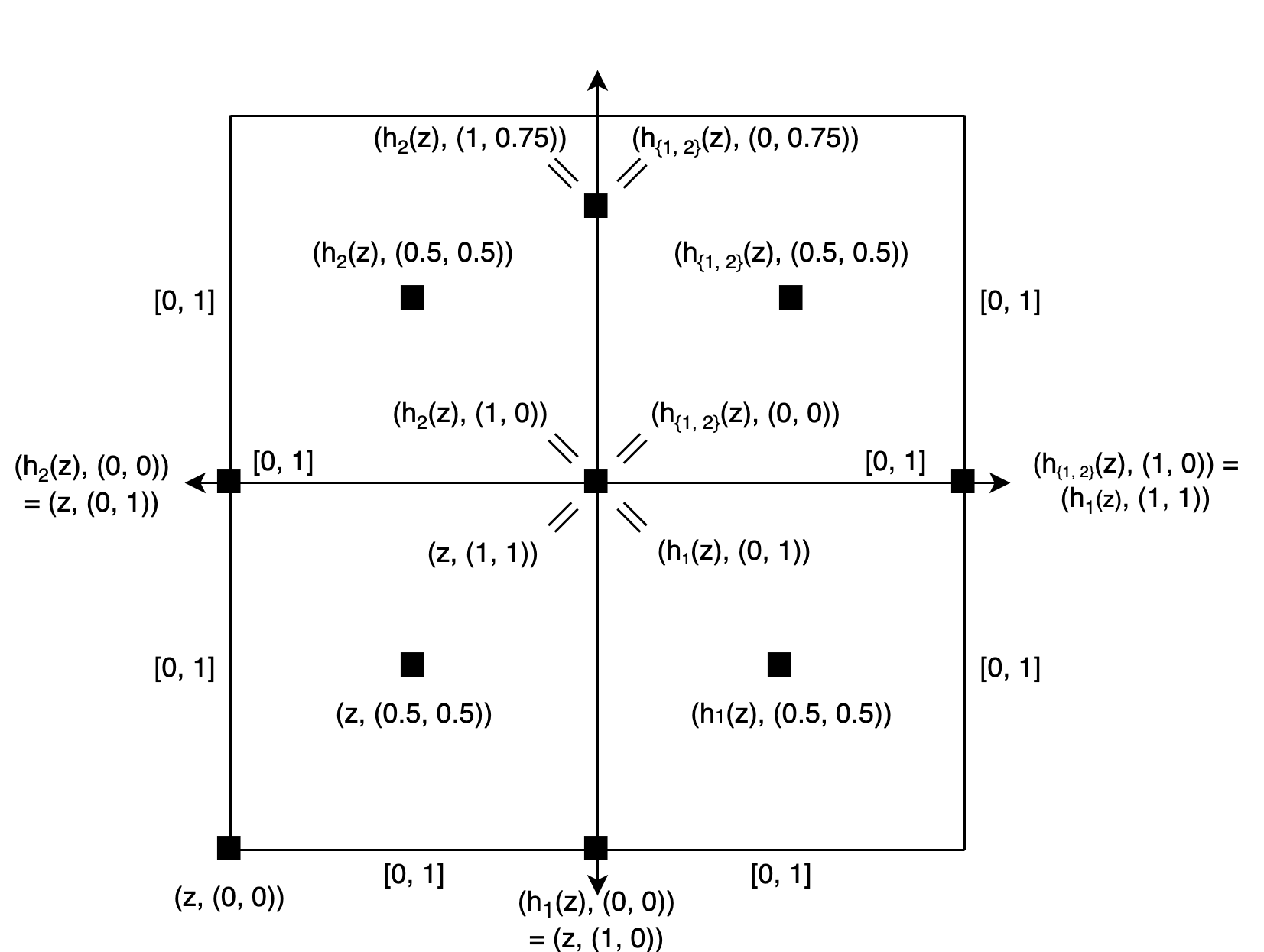}\newline
\caption{subgrid of $(\mathbb{Z}^2 \times \mathbb{I}^2)/h$ with origin in bottom left.}
\end{figure}

Now extend these $h_i$ shift maps for $i = 1, \cdots, n$ to the \v Cech-Stone compactification of $\mathbb{Z}^n$, $\beta(\mathbb{Z}^n)$, such that for a point $p \in \beta(\mathbb{Z}^n)$ we have $h_N(p) = h_1^{v_1} \circ \ldots \circ h_n^{v_n}(p)$. If we identify the point $(p, \vec{i}) \in \beta(\mathbb{Z}^n)\times \mathbb{I}^n$ as equivalent with the points $(h_N(p), \psi_A(\vec{i}))$ for any $A \subseteq \{a_1, \ldots, a_k\}$, where $A$ and $N$ are defined as above, then we obtain a quotient space $\Bigl(\beta(\mathbb{Z}^n)\times \mathbb{I}^n\Bigr)/h$, which is also a compactification of $\mathbb{R}^n$. 
\newline\indent Now for any $\vec{z} \in \mathbb{Z}^n$ and any $\vec{j} \in (0, 1)^n$ define the following homeomorphisms for a point $[(p, \vec{i})]_h \in \Bigl(\beta(\mathbb{Z}^n)\times \mathbb{I}^n\Bigr)/h$: 
\begin{eqnarray*}
\Lambda_1(\vec{z}, [(p, \vec{i})]_h) & = & [(h_{\vec{z}}(p), \vec{i})]_h, \\
\Lambda_2(\vec{j}, [(p, \vec{i})]_h) & = & [(h_N(p), \vec{i}+\vec{j}-N)]_h, 
\end{eqnarray*}where $N \in \mathbb{Z}^n$ is 1 for every coordinate of $\vec{i} +\vec{j} \geq 1$, and $0$ otherwise. Notice that $\mathbf{0} \leq \vec{i} + \vec{j} \leq 2 \cdot \mathbf{1}$, where $\mathbf{0}$ and $\mathbf{1}$ denote the constant vector of all zeros and all ones, respectively.

The first map, $\Lambda_1$ is clearly a homeomorphism since it only moves the point in the $B(\mathbb{Z}^n)$ space. 
The second map, $\Lambda_2$, is also a homeomorphism since any coordinate $k$ of $\vec{i} + \vec{j}$ that falls 
outside of $\mathbb{I}$, meaning that $(i+j)_k \geq 1$, has the corresponding $h_k^{-1}$ map applied, which in turn steps $(i+j)_k$ down by one such that it is $\in \mathbb{I}$. 
Now for any $\vec{x} \in \mathbb{R}^n$ let $\Lambda_{\vec{x}} = \Lambda_1([\vec{x}]) \circ \Lambda_2(\{\vec{x}\}) $, where $\{\vec{x}\} \in \mathbb{R}^n$ is the decimal part of each coordinate of $\vec{x}$ and $[\vec{x}] \in \mathbb{R}^n$ the integer part of each coordinate of $\vec{x}$. Then we can define a map $\sigma(\vec{x}, p) : \mathbb{R}^n \times \Bigl(\beta(\mathbb{Z}^n)\times \mathbb{I}^n\Bigr)/h \rightarrow \Bigl(\beta(\mathbb{Z}^n)\times \mathbb{I}^n\Bigr)/h$ for $\vec{x} \in \mathbb{R}^n$ and $w \in (\beta(\mathbb{Z}^n) \times \mathbb{I})/h$ as 
\[
\sigma(\vec{x}, w) = \Lambda_{\vec{x}}(w).
\]Finally, let $q$ denote the quotient map from $\beta(\mathbb{Z}^n)\times \mathbb{I}^n$ to $\Bigl(\beta(\mathbb{Z}^n)\times \mathbb{I}^n\Bigr)/h$. 
\begin{lemma}
$\biggl(\mathbb{R}^n, \Bigl(\beta(\mathbb{Z}^n)\times \mathbb{I}^n\Bigr)/h, \,\sigma , \, [(0, 0)]_h\biggr)$ is an ambit.
\end{lemma}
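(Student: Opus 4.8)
The plan is to verify the four requirements hidden in the word ``ambit'': that the phase space $X := \bigl(\beta(\mathbb{Z}^n)\times \mathbb{I}^n\bigr)/h$ is compact Hausdorff, that $\sigma$ satisfies the two action axioms, that $\sigma$ is jointly continuous, and that the orbit of the base point $[(0,0)]_h$ is dense in $X$. Compactness is immediate: $\beta(\mathbb{Z}^n)$ and $\mathbb{I}^n$ are compact, so their product is, and $X$ is the continuous image of this product under the quotient map $q$. For the Hausdorff property I would argue that the equivalence relation $h$ is closed: the identifications occur only on the faces of $\mathbb{I}^n$ and are governed by the finitely many continuous maps $(p,\vec i)\mapsto (h_N(p),\psi_A(\vec i))$ indexed by pairs $(N,A)$, so $h$ is a finite union of graphs of homeomorphisms between closed sets and hence closed; a closed equivalence relation on a compact Hausdorff space yields a Hausdorff quotient. (Equivalently, one may simply invoke that $X$ was already exhibited above as a compactification of $\mathbb{R}^n$.)

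The technical engine of the proof is a single closed form for $\Lambda_{\vec x}$ that dispenses with the discontinuous integer/decimal split. Working on a representative with $\vec i\in[0,1)^n$, I would show
\[
\Lambda_{\vec x}\bigl([(p,\vec i)]_h\bigr)=\bigl[(h_{[\vec i+\vec x]}(p),\ \{\vec i+\vec x\})\bigr]_h,
\]
where $[\,\cdot\,]$ and $\{\,\cdot\,\}$ denote coordinatewise floor and fractional part, and check that the right-hand side is independent of the chosen representative by using the defining relation of $h$. This is exactly the statement that $\vec x$ acts by translating the $\mathbb{I}^n$-coordinate and ``carrying'' the integer overflow into the $\beta(\mathbb{Z}^n)$-coordinate through $h$; unwinding $\Lambda_{\vec x}=\Lambda_1([\vec x])\circ\Lambda_2(\{\vec x\})$ and combining the carries $N=[\vec i+\{\vec x\}]$ and $[\vec x]$ recovers $[\vec i+\vec x]=[\vec x]+N$. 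With this form in hand the action axioms are routine: $\vec x=\mathbf 0$ gives $[\vec i]=\mathbf 0$ and $\{\vec i\}=\vec i$, so $\Lambda_{\mathbf 0}=\mathrm{id}$; and the cocycle identity $\Lambda_{\vec x+\vec y}=\Lambda_{\vec x}\circ\Lambda_{\vec y}$ follows by tracking how the floors of the partial sums add and using $h_{\vec u}\circ h_{\vec v}=h_{\vec u+\vec v}$ on $\beta(\mathbb{Z}^n)$, the latter obtained by extending the commuting homeomorphisms $h_1,\dots,h_n$ from the dense subset $\mathbb{Z}^n$.

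For continuity I would pass through the quotient map. Since $\mathbb{R}^n$ is locally compact Hausdorff, $\mathrm{id}_{\mathbb{R}^n}\times q$ is again a quotient map, so it suffices to prove that the lift $\widetilde\sigma(\vec x,(p,\vec i))=q\bigl(h_{[\vec i+\vec x]}(p),\{\vec i+\vec x\}\bigr)$ is continuous on $\mathbb{R}^n\times(\beta(\mathbb{Z}^n)\times\mathbb{I}^n)$. On the open set where no coordinate of $\vec i+\vec x$ is an integer the floor is locally constant and $\widetilde\sigma$ is manifestly continuous. The main obstacle is continuity across the ``seams'' where some coordinate $(\vec i+\vec x)_k$ passes through an integer: there the floor jumps by one and the corresponding fractional coordinate jumps from $1^-$ to $0^+$, so $\widetilde\sigma$ is not continuous as a map into $\beta(\mathbb{Z}^n)\times\mathbb{I}^n$. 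The point is that $h$ identifies precisely $q(\dots,1,\dots)$ with $q(h_k(\cdot),\dots,0,\dots)$, so the two one-sided limits coincide after applying $q$; making this matching precise, jointly in $(\vec x,(p,\vec i))$ and simultaneously over all coordinates that may cross an integer at once, is the heart of the argument and is where the construction of $h$ is genuinely used.

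Finally, density of the orbit is a short computation using the closed form. Starting from $[(0,\mathbf 0)]_h$ with $0\in\mathbb{Z}^n\subseteq\beta(\mathbb{Z}^n)$, one gets $\sigma(\vec x,[(0,\mathbf 0)]_h)=[(h_{[\vec x]}(0),\{\vec x\})]_h=[([\vec x],\{\vec x\})]_h$, so the orbit is exactly $\{[(\vec z,\vec i)]_h:\vec z\in\mathbb{Z}^n,\ \vec i\in[0,1)^n\}$, the canonical copy of $\mathbb{R}^n\cong(\mathbb{Z}^n\times\mathbb{I}^n)/h$ inside $X$. Since $\mathbb{Z}^n$ is dense in $\beta(\mathbb{Z}^n)$, this copy is dense in $X$, so the orbit of the base point is dense and the tuple is an ambit.
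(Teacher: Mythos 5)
Your proposal is correct in outline and in fact verifies more than the paper's own proof, which dismisses the action axioms with ``clearly,'' and never explicitly checks that the orbit of $[(0,0)]_h$ is dense or that the quotient is Hausdorff. The genuinely different ingredient is your closed form $\Lambda_{\vec x}([(p,\vec i)]_h)=[(h_{[\vec i+\vec x]}(p),\{\vec i+\vec x\})]_h$, which unifies the paper's two maps $\Lambda_1,\Lambda_2$ and makes the cocycle identity and the orbit computation one-line verifications; the paper never writes this formula down and instead keeps the integer and fractional parts separate throughout. Your route to continuity is also organized differently: you invoke the Whitehead lemma that $\mathrm{id}_{\mathbb{R}^n}\times q$ is a quotient map (valid since $\mathbb{R}^n$ is locally compact Hausdorff and $q$ is a closed surjection of compact Hausdorff spaces) and reduce to continuity of the lift, whereas the paper works directly in the quotient and exhibits, for each target neighborhood $V$, an explicit saturated preimage neighborhood built as the union $\bigcup_{A',B'}\xi_{A',B'}$ over all subsets of the boundary coordinates. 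These are two packagings of the same difficulty, and the paper's construction is precisely the step you defer with ``making this matching precise \ldots is the heart of the argument.'' One caution if you carry that step out: your language of ``one-sided limits coinciding after applying $q$'' suggests a sequential argument, but $\beta(\mathbb{Z}^n)$ is not first countable, so the matching must be done with neighborhoods --- for each of the finitely many representatives $(h_{N}(p),\psi_A(\vec i))$ of the image point you must choose a basic open box inside the saturated set $q^{-1}(V)$, arrange the $\beta(\mathbb{Z}^n)$-factors to be compatible translates of a single $W$, and take the union; this is exactly what the $\xi_{A',B'}$ in the paper accomplish. With that step written out, your argument is complete and somewhat more self-contained than the published one.
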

\begin{proof}
Clearly $\sigma$ is a group action. We will show that $\sigma$ is continuous. It is sufficient to show that for a neighborhood $V$ of $\sigma(\vec{k}, w)$ that there is a neighborhood $U$ of $(\vec{k}, w)$ such that $\sigma(U) \subset V$. 

We will consider two cases here.

\textit{Case 1}: Let $V$ be a neighborhood of $\sigma(\vec{k}, w)$, where $\vec{k} \in \mathbb{N}^n$, $\vec{i} \in [0, 1]^n$, and $w = [(p, \vec{i})]_h$. Then define $\gamma(\vec{i})^{\delta}_j \in [0, 1]^n$ such that 
\begin{equation*}
\gamma(\vec{i}, \delta)_j = 
\begin{cases}
(1 - \frac{\delta}{2}, 1] & i_j = 1 \\
[0, \frac{\delta}{2}) & i_j = 0 \\
(i_j - \frac{\delta}{2}, i_j + \frac{\delta}{2}) & i_j \in (0, 1)
\end{cases}
\end{equation*}
First, since $w = [(p, \vec{i})]_h$ then there exists a $W \subseteq \beta(\mathbb{Z}^n)$ and $\epsilon_j > 0$ for each $i_j$, such that $(h_{\vec{k}}(p), \vec{i})\in W \times \gamma(\vec{i}, \epsilon) \subseteq q^{-1}(V)$, where $\epsilon = \min\{\epsilon_j: 1 \leq j \leq n\}$. Now let $A = \{a_1, \ldots, a_k\} \subseteq [n]$ be the corresponding coordinate set for $\vec{i}$ such that $i_{a_1} = \ldots = i_{a_k} = 1$ and let $B = \{b_1, \ldots, b_j\} \subseteq [n]$ be the corresponding coordinate set for $\vec{i}$ such that $i_{b_1} = \ldots = i_{b_j} = 0$. Then for any $A' \subseteq A$ and $B' \subseteq B$ take $1_{A'}, 1_{B'} \in \{0, 1\}^n$ as before and we see that, 
\begin{dmath*}
(h_{(\vec{k} + 1_{A'} - 1_{B'})}(p), \psi_{A'+B'}(i)) \in h_{1_{A'} - 1_{B'}}(W) \times \gamma(\psi_{A'+B'}(\vec{i}), \epsilon) \subseteq q^{-1}(V)
\end{dmath*}.

For notation purposes let $$\xi_{A,B} = h_{-\vec{k} + 1_A - 1_B}(W) \times \gamma(\psi_{A+B}(\vec{i}), \epsilon).$$Then here we have 
$$
U = \prod_{j=1}^n\Biggr(k_j -\frac{\epsilon}{2}, k_j + \frac{\epsilon}{2}\Biggr) \times q\Biggl(\bigcup_{A'\in \mathcal{P}(A),\, B' \in \mathcal{P}(B)} \xi_{A', B'}\Biggr),$$
Now $U$ is an open neighborhood of $(\vec{k}, w)$ and $\sigma(U) \subset V$. 

\textit{Case 2}: Let $\vec{k} \in \mathbb{R}^n$, $\vec{i} \in [0, 1]^n$, and $w = [(p, \vec{i})]_h$. 
Continuity of points of the form $\vec{x} \in (0, 1)^n$ is similar. More clearly, the $\Lambda_2$ map is used to shift the proper coordinates of $p$ up or down when $\Lambda_1({\vec{x}}, \vec{i})$ leaves $\mathbb{I}$ in those coordinates.

Now we are considering points of the form $(\vec{k}, w)$, where $\vec{k} \in \mathbb{R}^n \backslash \mathbb{Z}^n$. Let $V$ be an open set such that 
$\sigma(\vec{k}, w) \in V$. Then, using the above results, there are open sets $V_1, V_2$ such that $\{\vec{k}\} \in V_1 \subseteq (0, 1)^n$, $w \in V_2 \subseteq \Bigl(\beta(\mathbb{Z}^n)\times \mathbb{I}^n\Bigr)/h$ and $\sigma(V_1 \times V_2) \subseteq \Lambda_1^{-1}({[\vec{k}]})(V)$. Using this, and that $\sigma$ is the composition of the $\Lambda$ maps, we see that $\sigma(([\vec{x}] + V_1) \times V_2) \subseteq V$. Thus, our $U = ([\vec{x}] + V_1) \times V_2$.
\end{proof}

\begin{theorem}
The Universal Ambit for the group $\mathbb{R}^n$ is isomorphic to the ambit $\bigl(\mathbb{R}^n, (\beta(\mathbb{Z}^n)\times \mathbb{I}^n)/h, \sigma ; [(0, 0)]_h\bigr)$.
\end{theorem}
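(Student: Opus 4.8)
The plan is to avoid analyzing the regular ultrafilters of $S(\mathbb{R}^n)$ directly and instead verify that the ambit produced by the Lemma, which I will abbreviate $\mathcal{X} = \bigl(\mathbb{R}^n, (\beta(\mathbb{Z}^n)\times\mathbb{I}^n)/h,\, \sigma;\, [(0,0)]_h\bigr)$, itself satisfies the universal property; uniqueness of the universal ambit then forces $\mathcal{X} \cong \bigl(\mathbb{R}^n, S(\mathbb{R}^n), \pi, 0\bigr)$. So let $(\mathbb{R}^n, Y, \varrho; y_0)$ be an arbitrary ambit, and the goal is to build a surjective homomorphism of ambits $\Psi\colon (\beta(\mathbb{Z}^n)\times\mathbb{I}^n)/h \to Y$ with $\Psi([(0,0)]_h) = y_0$. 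First I would use that $\mathbb{Z}^n$ is discrete: the assignment $\vec{z}\mapsto \varrho(\vec{z}, y_0)$ from $\mathbb{Z}^n$ into the compact Hausdorff space $Y$ extends uniquely to a continuous map $\hat\varrho\colon \beta(\mathbb{Z}^n)\to Y$. I then set $\tilde\Psi(p,\vec{i}) = \varrho(\vec{i}, \hat\varrho(p))$, regarding $\vec{i}\in\mathbb{I}^n\subseteq\mathbb{R}^n$; this is continuous on $\beta(\mathbb{Z}^n)\times\mathbb{I}^n$ as the composite of $(p,\vec{i})\mapsto(\vec{i},\hat\varrho(p))$ with the action $\varrho$. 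On the embedded copy of $\mathbb{R}^n$ (the points with $\vec{z}\in\mathbb{Z}^n$) it returns $\varrho(\vec{i}, \varrho(\vec{z}, y_0)) = \varrho(\vec{z}+\vec{i}, y_0)$, so $\tilde\Psi$ restricts to the orbit map of $y_0$.

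The crux is to show $\tilde\Psi$ descends through the quotient $q$, i.e. that it is constant on $h$-classes. The point is that $\hat\varrho$ intertwines the extended shift $h_N$ on $\beta(\mathbb{Z}^n)$ with the action of $N$ on $Y$: the identity $\hat\varrho(h_N(p)) = \varrho(N,\hat\varrho(p))$ holds on the dense set $\mathbb{Z}^n$, and, both sides being continuous in $p$ with $Y$ Hausdorff, it holds on all of $\beta(\mathbb{Z}^n)$. Combining this with the algebraic relation $\psi_A(\vec{i}) + N = \vec{i}$ dictated by the $h$-identifications (with $N = \mathbf{1}_A$ for coordinates carrying a $1$, and the downward analogue $N=-\mathbf{1}_B$ for coordinates carrying a $0$), I obtain $\varrho(\psi_A(\vec{i}),\hat\varrho(h_N(p))) = \varrho(\psi_A(\vec{i})+N, \hat\varrho(p)) = \varrho(\vec{i},\hat\varrho(p))$, so $\tilde\Psi$ agrees on identified points. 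Since $q$ is a quotient map, $\tilde\Psi$ factors as $\tilde\Psi = \Psi\circ q$ with $\Psi$ continuous, and $\Psi([(0,0)]_h) = \varrho(0,\hat\varrho(0)) = y_0$.

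It then remains to confirm that $\Psi$ is an equivariant surjection. Surjectivity is immediate: $\Psi$ maps the embedded $\mathbb{R}^n$ (which is the $\sigma$-orbit of the base point) onto $\varrho_{y_0}(\mathbb{R}^n)$, dense in $Y$, and its image is compact hence closed, so equals $Y$. For equivariance I would fix $\vec{x}\in\mathbb{R}^n$ and note that $\Psi\circ\sigma^{\vec{x}}$ and $\varrho^{\vec{x}}\circ\Psi$ are continuous maps from the compact phase space into the Hausdorff space $Y$ that agree on the dense embedded $\mathbb{R}^n$ (there both send the real point $\vec{y}$ to $\varrho(\vec{x}+\vec{y}, y_0)$), whence they coincide everywhere. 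Thus $\Psi$ is a surjective homomorphism of ambits and $\mathcal{X}$ is universal. To conclude, I invoke uniqueness: any ambit homomorphism out of a universal ambit is determined by the image of the base point, since equivariance fixes it on the dense orbit and continuity fixes it elsewhere; hence between two universal ambits the mutually guaranteed surjections have composites that fix the base point and are therefore the identity, giving the desired isomorphism $\mathcal{X}\cong\bigl(\mathbb{R}^n, S(\mathbb{R}^n),\pi,0\bigr)$.

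The step I expect to be the main obstacle is the descent of $\tilde\Psi$ through the boundary identifications $h$, which is precisely where the extension to $\beta(\mathbb{Z}^n)$, the integer shifts $h_N$, and the fractional action on $\mathbb{I}^n$ must be shown to interact compatibly; everything else reduces to the principle that continuous maps into a Hausdorff space agreeing on a dense orbit coincide. I note in passing that an alternative route — obtaining the canonical surjection $S(\mathbb{R}^n)\to (\beta(\mathbb{Z}^n)\times\mathbb{I}^n)/h$ from universality of $S(\mathbb{R}^n)$ and then proving injectivity by showing every bounded uniformly continuous function on $\mathbb{R}^n$ extends to the quotient (so that the quotient realizes the full Samuel compactification) — also works, but I prefer the direct verification above because it stays within the ambit-theoretic framework and sidesteps any fine control of the ultrafilter structure of $S(\mathbb{R}^n)$.
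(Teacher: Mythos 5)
Your proof is correct, but it takes a genuinely different route from the paper's. The paper starts from the surjective homomorphism $\phi\colon (\mathbb{R}^n, S(\mathbb{R}^n),\pi,0)\to(\mathbb{R}^n,(\beta(\mathbb{Z}^n)\times\mathbb{I}^n)/h,\sigma;[(0,0)]_h)$ that universality of $S(\mathbb{R}^n)$ already provides, and proves that $\phi$ is injective: given distinct regular ultrafilters $\mathcal{F}\neq\mathcal{F}'$, it separates them by sets $V,V'$ with $((-\vec{\epsilon},\vec{\epsilon})+\mathrm{cl}\,V)\cap\mathrm{cl}\,V'=\emptyset$ and derives a contradiction from the assumption that $\mathrm{cl}\,\phi(\mathrm{cl}\,V)$ and $\mathrm{cl}\,\phi(\mathrm{cl}\,V')$ meet, via an ultrafilter-intersection argument split into an interior case and a boundary case handled with the $\psi$ and $h_N$ maps. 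You instead verify the universal property of the quotient directly: extend the orbit map of $y_0$ over $\beta(\mathbb{Z}^n)$ using discreteness of $\mathbb{Z}^n$, act by the fractional coordinate, check the result is constant on $h$-classes (your identity $\psi_A(\vec{i})+\mathbf{1}_A=\vec{i}$ combined with the intertwining $\hat\varrho\circ h_N=\varrho^N\circ\hat\varrho$, proved by density and Hausdorffness, is exactly what the descent requires), and then invoke uniqueness of the universal ambit. Your route buys a soft, ultrafilter-free argument that moreover exhibits every ambit explicitly as a factor of the quotient; the paper's route buys a concrete description of the isomorphism in terms of the regular-ultrafilter presentation of $S(\mathbb{R}^n)$, at the cost of the delicate boundary-case analysis. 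Both arguments rest on the preceding Lemma that $\sigma$ is a continuous action, and yours is complete modulo the routine verifications you indicate.
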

By the definition of the universal ambit we have the following surjective homomorphism 
\[
\phi : (\mathbb{R}^n, S(\mathbb{R}^n), \pi, 0) \rightarrow (\mathbb{R}^n, \Bigl(\beta(\mathbb{Z}^n)\times \mathbb{I}^n\Bigr)/h, \sigma ; [(0, 0)]_h).
\] By the mapping of base points to each other we have that $\phi(0) = [(0, 0)]_h$, so $\phi \restriction \mathbb{R}^n : \mathbb{R}^n \rightarrow \mathbb{Z}^n \times \mathbb{I}^n/h$ is a map of the form $\phi(\vec{x}) = [([\vec{x}], \{\vec{x}\})]_h$. Since $\phi$ is surjective, it is sufficient to show that it is injective. 

Let $\mathcal{F}, \mathcal{F'} \in S(\mathbb{R}^n)$ such that $\mathcal{F} \neq \mathcal{F'}$. Now take $U \in \mathcal{F}$ and $U' \in \mathcal{F'}$ such that $U \cap U' = \emptyset$. Then we can find open sets $V \in \mathcal{F}$ and $V' \in \mathcal{F'}$ where $\text{cl}_{\mathbb{R}^n} V \Subset U$ and $\text{cl}_{\mathbb{R}^n} V' \Subset U'$, which are disjoint. By the definition of strong inclusion in relation to $\mathbb{R}^n$, there is an $\vec{\epsilon} = (\epsilon_1, \ldots, \epsilon_n)$, where $\epsilon_j > 0$ for $1 \leq j \leq n$ such that $((-\vec{\epsilon}, \vec{\epsilon}) + \text{cl}_{\mathbb{R}^n} V) \cap \text{cl}_{\mathbb{R}^n} V' = \emptyset$. Clearly, $\mathcal{F} \in \text{cl}_{S(\mathbb{R}^n)}\text{cl}_{\mathbb{R}^n} V$ and $\mathcal{F'} \in \text{cl}_{S(\mathbb{R}^n)}\text{cl}_{\mathbb{R}^n} V'$. Now let $F_1 = \phi(\text{cl}_{\mathbb{R}^n} V)$, $F_2 = \phi(\text{cl}_{\mathbb{R}^n} V')$, $K' =\beta(\mathbb{Z}^n)\times \mathbb{I}^n$, and $K = (\beta(\mathbb{Z}^n)\times \mathbb{I}^n)/h$. It suffices to show that $\text{cl}_K F_1 \cap \text{cl}_K F_2 = \emptyset$. 

We will proceed by contradiction. Suppose there is some $[(p, \vec{i})]_h \in \text{cl}_K F_1 \cap \text{cl}_K F_2$. Let $\epsilon = \min\{\epsilon_j: 1 \leq j \leq n\}$ and set $\delta < \frac{\epsilon}{2}$.

\textit{Case 1:} $\vec{i} \in (0, 1)^n$
This means that $(p, \vec{i}) \in \text{cl}_{K'} q^{-1}(F_1) \cap \text{cl}_{K'} q^{-1}(F_2)$. Let 
$$
A_j = \Biggr\{\vec{z} \in \mathbb{Z}^n: \Biggr(\{\vec{z}\} \times \prod_{k=1}^n(i_k - \frac{\delta}{2}, i_k + \frac{\delta}{2})\Biggr) \cap q^{-1}(F_j) \not= \emptyset\Biggr\}, \,\,\,\,j \in \{1, 2\}.$$
We have that $A_1, A_2 \in p$, and by definition of ultrafilters that $A_1 \cap A_2 \in p$. This means there is some $\vec{z} \in A_1 \cap A_2$, and by definition of $A_1, A_2$ there are $[(\vec{z}, i_1)]_h \in F_1$ and $[(\vec{z}, i_2)]_h \in F_2$ where for each coordinate $|(i_{1})_t - (i_{2})_t| < 2\delta < \epsilon$ for $1 \leq t \leq n$. But this contradicts that $((-\vec{\epsilon}, \vec{\epsilon}) + \text{cl}_{\mathbb{R}^n} V) \cap \text{cl}_{\mathbb{R}^n} V' = \emptyset$.

\textit{Case 2:} $\vec{i} \in [0, 1]^n$
\newline Let $A = \{a_1, \ldots, a_l\} \subseteq [n]$ and $B = \{b_1, \ldots, b_m\} \subseteq [n]$ be the coordinate sets for $\vec{i}$ such that $i_{a_1} = \ldots = i_{a_l} = 1$ and $i_{b_1} = \ldots = i_{b_m} = 0$. Now let $C = A \cup -B$ and have $\{C_1, \ldots, C_v\} = \mathcal{P}(C)$. Then define,
\begin{equation*}
(1'_{C_j})_k = 
\begin{cases}
1 & k \in C_j \\
-1 & -k \in C_j \\
0 & \text{otherwise}
\end{cases}
\end{equation*}
Then we have that the preimages are
\[
q^{-1}([(p, \vec{i})]_h) = \{(h_{1'_{C_j}}(p), \psi_{C_j}(\vec{i})),\, C_j \in \mathcal{P}(C)\}.\]
Now since $[(p, \vec{i})]_h \in  \text{cl}_K F_1 \cap \text{cl}_K F_2$ we must have that there are some preimages $x, x'\in q^{-1}([(p, i)]_h)$ such that $x \in \text{cl}_{K'} q^{-1}(F_1)$ and  $x' \in \text{cl}_{K'} q^{-1}(F_2)$. We can represent $x = (h_{1'_{C_{\alpha}}}(p), \psi_{C_{\alpha}}(\vec{i}))$ and $x' = (h_{1'_{C_{\alpha'}}}(p), \psi_{C_{\alpha'}}(\vec{i}))$ for some $1 \leq \alpha, \alpha' \leq v$. Assume $\alpha < \alpha'$ (the proof for $\alpha' < \alpha$ is similar). In this case let $\beta = C_{\alpha'}\setminus C_{\alpha} \cup -(C_{\alpha}\setminus C_{\alpha'})$. Now, using the $\gamma$ map defined before, we see that a set 
\[
X_1 = \Biggr\{\vec{z} \in \mathbb{Z}^n: \Biggr(\{\vec{z}\} \times \gamma^{\delta}(\psi_{C_\alpha}(\vec{i}))\Biggr) \cap q^{-1}(F_1) \not= \emptyset\Biggr\},
\]
belongs to $h_{1'_{C_{\alpha}}}(p)$. Similarly, a set
\[
X_2 = \Biggr\{\vec{z} \in \mathbb{Z}^n: \Biggr(\{\vec{z}\} \times \gamma^{\delta}(\psi_{C_{\alpha'}}(\vec{i}))\Biggr) \cap q^{-1}(F_2) \not= \emptyset\Biggr\},
\]
belongs to $h_{1'_\beta}(h_{1'_{C_{\alpha}}}(p)) = h_{1'_{C_{\alpha'}}}(p)$. 

Since $X_2 \in h_{1'_{C_{\alpha'}}}(p)$ then $X_2 - 1'_\beta \in h_{1'_{C_{\alpha}}}(p)$. Let $z \in X_1 \cap (X_2 - 1'_\beta)$. Then there exist points $[(z, \vec{i_1})]_h \in F_1$ and $[(z+1'_\beta, \vec{i_2})]_h \in F_2$. These points have the property that for coordinate $j \in \beta$ then $(z+1'_\beta)_j = z_j + 1$, $(i_1)_j \in (1-\delta, 1]$ and $(i_2)_j \in [0, \delta)$, or if $-j \in \beta$ then $(z+1'_\beta)_j = z_j - 1$, $(i_1)_j \in [0, \delta)$ and $(i_2)_j \in (1-\delta, 1]$, or lastly if $(z+1'_\beta)_j = z_j$ then $|(i_1)_j - (i_2)_j| < \delta +\delta < \epsilon$. This is a contradiction.
\newline\indent Thus, $\phi$ is injective and is an isomorphism of ambits. 

\section{Universal Minimal Flow for $\mathbb{R}^n$ }
\begin{theorem}
The phase space of the universal minimal dynamical system for the group $\mathbb{R}^n$ is homeomorphic to $M(\mathbb{Z}^n) \times \mathbb{I}^n/h$. 
\end{theorem}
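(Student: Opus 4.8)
The plan is to realize the claimed space as a minimal subflow of the universal ambit identified above and then invoke the standard fact that every minimal subflow of a universal ambit is isomorphic to the universal minimal flow. Since $\mathbb{Z}^n$ is discrete its Samuel compactification is $\beta(\mathbb{Z}^n)$, and $M(\mathbb{Z}^n)$ is a fixed minimal subflow of $\beta(\mathbb{Z}^n)$ for the shift action generated by $h_1, \dots, h_n$; in particular $M(\mathbb{Z}^n)$ is closed in $\beta(\mathbb{Z}^n)$ and invariant under every $h_N$, $N \in \mathbb{Z}^n$. Using the isomorphism $S(\mathbb{R}^n) \cong (\beta(\mathbb{Z}^n)\times\mathbb{I}^n)/h$ from Theorem 2, it then suffices to exhibit a minimal subflow of $(\beta(\mathbb{Z}^n)\times\mathbb{I}^n)/h$ homeomorphic to $(M(\mathbb{Z}^n)\times\mathbb{I}^n)/h$.

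First I would check that $(M(\mathbb{Z}^n)\times\mathbb{I}^n)/h$ is a genuine subflow. Because $M(\mathbb{Z}^n)$ is $h_N$-invariant, the relation $h$ restricts to $M(\mathbb{Z}^n)\times\mathbb{I}^n$: every point equivalent to $(p,\vec i)$ with $p\in M(\mathbb{Z}^n)$ has the form $(h_N(p),\psi_A(\vec i))$ with $h_N(p)\in M(\mathbb{Z}^n)$. Thus $M(\mathbb{Z}^n)\times\mathbb{I}^n$ is $h$-saturated, so its image $q(M(\mathbb{Z}^n)\times\mathbb{I}^n)$ is closed (its preimage is the closed set $M(\mathbb{Z}^n)\times\mathbb{I}^n$) and nonempty. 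Invariance under $\sigma$ follows from the definitions of $\Lambda_1$ and $\Lambda_2$: both alter the $\beta(\mathbb{Z}^n)$-coordinate only through maps $h_N$, under which $M(\mathbb{Z}^n)$ is stable, while the $\mathbb{I}^n$-coordinate remains in $\mathbb{I}^n$. Hence $q(M(\mathbb{Z}^n)\times\mathbb{I}^n)$ is a closed, $\sigma$-invariant, nonempty subset, i.e. a subflow.

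The central step, and the main obstacle, is minimality. Fix a point $[(p,\vec i)]_h$ with $p\in M(\mathbb{Z}^n)$ and a nonempty open $V\subseteq (M(\mathbb{Z}^n)\times\mathbb{I}^n)/h$; I must produce $\vec x\in\mathbb{R}^n$ with $\sigma(\vec x,[(p,\vec i)]_h)\in V$. Since $q^{-1}(V)$ is open and interval coordinates lying in the open cube $(0,1)^n$ are dense, I can select a basic box $W\times J\subseteq q^{-1}(V)$ with $W$ open in $M(\mathbb{Z}^n)$ and $J\subseteq(0,1)^n$. I would then decouple the two coordinates. Computing $\Lambda_{\vec x}=\Lambda_1([\vec x])\circ\Lambda_2(\{\vec x\})$ shows that $\sigma(\vec x,[(p,\vec i)]_h)=[(h_{[\vec x]+N}(p),\,\vec i+\{\vec x\}-N)]_h$, where $N$ is the integer shift produced by $\Lambda_2$. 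The interval coordinate $\vec i+\{\vec x\}-N$ depends only on $\{\vec x\}$, so I first choose $\{\vec x\}$ to place it inside $J$ (which fixes $N$), and I remain free to choose the integer part $[\vec x]$. By minimality of $M(\mathbb{Z}^n)$ the orbit $\{h_M(p):M\in\mathbb{Z}^n\}$ is dense, so some $h_M(p)\in W$; setting $[\vec x]=M-N$ forces the $\beta(\mathbb{Z}^n)$-coordinate to equal $h_M(p)\in W$ while leaving the interval coordinate in $J$. Thus $\sigma(\vec x,[(p,\vec i)]_h)\in q(W\times J)\subseteq V$, proving every orbit is dense and the subflow is minimal.

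Finally, a minimal subflow of the universal ambit $(\mathbb{R}^n,S(\mathbb{R}^n),\pi;0)$ is isomorphic to the universal minimal flow $M(\mathbb{R}^n)$, which is unique up to isomorphism. Transporting the subflow above through the isomorphism of Theorem 2 exhibits $(M(\mathbb{Z}^n)\times\mathbb{I}^n)/h$ as such a minimal subflow, whence $M(\mathbb{R}^n)$ is homeomorphic to $(M(\mathbb{Z}^n)\times\mathbb{I}^n)/h$. The delicate points I would watch are the bookkeeping of the integer shift $N$ introduced by $\Lambda_2$ when $\vec i+\{\vec x\}$ crosses a boundary, and the claim that shrinking $J$ into the open cube $(0,1)^n$ loses no generality; both reduce to the boundary identifications already handled in the proof of Theorem 2.
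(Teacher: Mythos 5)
Your proposal is correct and follows essentially the same route as the paper: identify $q(M(\mathbb{Z}^n)\times\mathbb{I}^n)$ as a closed, invariant, minimal subflow of the universal ambit $(\beta(\mathbb{Z}^n)\times\mathbb{I}^n)/h$ and invoke the coincidence of minimal subflows of the universal ambit with $M(\mathbb{R}^n)$. The only difference is that you spell out the density-of-orbits step (decoupling the fractional part, which controls the $\mathbb{I}^n$-coordinate, from the integer part, which by minimality of $M(\mathbb{Z}^n)$ can be chosen to hit any open $W$), whereas the paper simply asserts that dense orbits in $M$ yield dense orbits in $M\times\mathbb{I}^n/h$.
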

Since the systems $(\mathbb{R}^n, S(\mathbb{R}^n), \pi, 0)$ and $(\mathbb{R}^n, \bigl(\beta(\mathbb{Z}^n)\times \mathbb{I}^n\bigr)/h, \sigma ; [(0, 0)]_h\bigr)$ are isomorphic, the minimal subsets of these systems are also isomorphic. Thus, it suffices to consider a minimal subset in the system $\bigl(\mathbb{R}^n, \bigl(\beta(\mathbb{Z}^n)\times \mathbb{I}^n\bigr)/h, \sigma ; [(0, 0)]_h\bigr)$. Let $M \subseteq \beta(\mathbb{Z}^n)$ be minimal, closed, and invariant in $\beta(\mathbb{Z}^n)$ with $H$. Clearly, $M$ is homeomorphic to $M(\mathbb{Z}^n)$, the universal minimal flow of $\mathbb{Z}^n$. The set $M \times \mathbb{I}^n/h \subseteq \bigl(\beta(\mathbb{Z}^n)\times \mathbb{I}^n\bigr)/h$ is also closed and invariant in the system $\bigl(\mathbb{R}^n, \bigl(\beta(\mathbb{Z}^n)\times \mathbb{I}^n\bigr)/h, \sigma ; [(0, 0)]_h\bigr)$. Lastly, by definition of $M$ being minimal, each $p \in M$ has a dense orbit in $M$ and so any $[(p, i)]_h \in M \times \mathbb{I}^n/h \subseteq \bigl(\beta(\mathbb{Z}^n)\times \mathbb{I}^n\bigr)/h$ also has dense orbit in $M \times \mathbb{I}^n/h$. Thus, it is apparent that $M(\mathbb{R}^n)$ is homeomorphic to $M(\mathbb{Z}^n) \times \mathbb{I}^n/h$.

\section{Open Problems}
Given such a decomposition of a locally compact group, we can ask whether such decompositions also hold in general for the class of locally compact groups. From Barto\v sov\' a we know that if we tighten the requirement for such groups to have a compact normal subgroup that acts freely on the UMF and there exists a uniformly continuous cross-section then we can find a product decomposition of the UMF \cite{[B]}. The decomposition presented in this paper differs in that the second term in the product, $\mathbb{I}$, is not normal and does not allow for a short exact sequence to be formed. More locally compact groups like this can be investigated, such as $GL(n, \mathbb{R})$ and $SL(n, \mathbb{R})$, with differing product decompositions, corresponding equivalence relations to form the space, and group actions. Further work may also seek to generalize the existence of such product decompositions with more requirements on the locally compact group, in the vein of \cite{[B]}.

\renewcommand\refname{\vskip -0.2cm}
\section{References}

\bibliographystyle{plain} 
\bibliography{dcites}
%
%

\end{document}